\theoremstyle{plain}
\newtheorem{theorem}{Theorem}[section]
\newtheorem{corollary}[theorem]{Corollary}
\newtheorem{definition}[theorem]{Definition}
\newtheorem{example}[theorem]{Example}
\newtheorem{proposition}[theorem]{Proposition}
\newtheorem{remark}[theorem]{Remark}
\newtheorem{question}[theorem]{Question}
\numberwithin{equation}{section}
\newcommand{\A}{\mathbb{A}}
\newcommand{\Cont}{\mathrm{C}}
\newcommand{\Exp}{\mathrm{Exp}}
\newcommand{\id}{\mathrm{id}}
\newcommand{\Ll}{\mathcal{L}}
\newcommand{\N}{\mathbb{N}}
\newcommand{\Pp}{\mathcal{P}_f(\Ss)}
\newcommand{\Q}{\mathbb{Q}}
\newcommand{\R}{\mathbb R}
\newcommand{\Ss}{\mathsf{S}}
\newcommand{\Susp}{\Sigma_f(\Ss)}
\newcommand{\UC}{\mathbb{S}^1}
\newcommand{\Z}{\mathbb{Z}}
\newcommand{\T}{\mathbb{T}}
\newcommand{\Zz}{\widehat{\Z}}
\newcommand{\Char}{\mathrm{Char}}
\newcommand{\Hom}{\mathrm{Hom}}
\newcommand{\Homeo}{\mathrm{Homeo}}
\newcommand{\mTo}{\longmapsto}
\newcommand{\To}{\longrightarrow}
\newcommand{\ie}{\emph{i.e.\,}}
\date{\today}
\begin{document}

\title[Poincar\'e Theory for $\A/\Q$]{Poincar\'e Theory for the Ad\`ele Class Group $\A/\Q$ and Compact Abelian one dimensional Solenoidal Groups}
\author[M. Cruz -- L\'opez and A. Verjovsky]{Manuel Cruz -- L\'opez$^*$ and Alberto Verjovsky$^{**}$}

\address{$*$ Departamento de Matem\'aticas,
Universidad de Guanajuato, Jalisco s/n, Mineral de Valenciana,
Guanajuato, Gto. 36240 M\'exico.}
\email{manuelcl@ugto.mx}

\address{$**$ Instituto de Matem\'aticas, Unidad Cuernavaca,
Universidad Nacional Aut\'onoma de M\'exico, Apdo. Postal 2 C.P.
2000, Cuernavaca, Mor. M\'exico}
\email{alberto@matcuer.unam.mx}

\subjclass[2000]{Primary: 22XX, 37XX, Secondary: 22Cxx, 37Bxx}

\keywords{compact abelian group, solenoidal group, Poincar\'e rotation}

\begin{abstract}

This article presents a generalization of the notion of \emph{Poincar\'e rotation set} to homeomorphisms of the ad\`ele class group $\A/\Q$ of the rational numbers $\Q$, which is a connected compact abelian group which can be identified with the one-dimensional universal solenoid $\Ss$,
\ie the algebraic universal covering of the circle.  The definition is first introduced in general for homeomorphisms of $\Ss$ which are isotopic to a translation, and then specializing in homeomorphisms of $\Ss$ isotopic to the identity, in which case the rotation set is a closed interval contained in the base leaf (the connected component of the identity). If in the latter case  the rotation interval reduces to a single 
element $\rho$ and $\rho$ is irrational (\ie it is a monothetic generator of $\Ss$), we show that the homeomorphism is semiconjugate to the translation $z\mapsto\rho{z}$, like in the classical theory of Poincar\'e.
This theory is valid for any general compact abelian one dimensional solenoidal group 
$\Ss_G$, which are Pontryagin duals of dense subgroups $G$ of the rational numbers with the discrete topology. These solenoidal groups are one-dimensional laminations which are locally homeomorphic to the product of a Cantor set by an interval so they behave very much like a ``diffuse'' version of the circle. Our approach differs from others because we use
Pontryagin duality of compact abelian groups to define the rotation sets.

\end{abstract}

\maketitle

\section[Introduction]{Introduction}
\label{introduction}

In 1885, H. Poincar\'e (see \cite{Poi}) introduced an invariant of topological conjugation for homeomorphisms of the unit circle which are isotopic to the identity:
\[ \rho:\Homeo_+(\UC)\To \UC, \quad f\mTo \rho(f), \]
called the \textsf{rotation number} of $f$. He then proved a remarkable topological classification theorem for the dynamics of any orientation preserving homeomorphism 
$f\in \Homeo_+(\UC)$: $f$ has a periodic orbit if and only $\rho(f)=e^{2\pi{i}\frac{p}{q}}, p,q\in\Z$ (\ie the rotation number is rational). If the rotation number is of the form
$\rho(f)=e^{2\pi{i}\alpha}$ with $\alpha$ irrational then $f$ is semiconjugate to the irrational rotation $R_{\rho(f)}\,$,
$z\mapsto{\rho(f)z}$. The semiconjugacy is actually a conjugacy if the orbits of $f$ are dense. This work was continued by A. Denjoy in 1932 (see \cite{Den}) who, among other things, showed that if $f$ is a diffeomorphism with irrational rotation number and its derivative has bounded variation then $f$ is conjugated to the rotation $R_{\rho(f)}$. In $1965$, V.I. Arnold (see \cite{Arn1}) solved the conjugation problem when the diffeomorphism is real analytic and close to a rotation, by introducing a Diophantine condition. An important issue is the existence of a differentiable conjugacy where M. Herman made so many important contributions (see \cite{Her}).

Further developments of this theory have been one of the most fruitful subjects in dynamical systems, as shown by the works of A.N. Kolmogorov, V.I. Arnold, J. Moser, M.R. Herman, A.D. Brjuno, J.C. Yoccoz, among others (see \cite{Kol},\cite{Arn2},\cite{Mos},\cite{Brj1,Brj2}, \cite{Yoc}; see also \cite{Ghys}, \cite{Her}, \cite{Nav}).

In general, for compact connected abelian groups there is a set called the \emph{rotation set} attached to a homeomorphism (see for instance, \cite{MZ} and \cite{Kwa}). For the case studied here of homeomorphisms isotopic to the identity of one dimensional compact solenoidal groups, this rotation set is an interval. 

This article generalizes the Poincar\'e theory to any compact abelian one dimensional solenoidal group $\Ss_G$, in the case when the \emph{rotation set consists of one point and this element is a monothetic generator of $\Ss_G$}. The theory is first developed for the general case of homeomorphisms of $\Ss_G$ which are isotopic to a translation by an element not in the base leaf. Translations by the zero element of the group leads to homeomorphisms isotopic to the identity which is precisely reminiscent of the classical theory. In this context, the semiconjugation theorem is proved.

Compact abelian one dimensional solenoidal groups $\Ss_G$ are obtained as continuous homomorphic images of the algebraic universal covering space of the circle 
\[ \displaystyle{\Ss := \varprojlim_{n\in \N} \R/n\Z} \] 
or, by Pontryagin duality, as compact abelian topological groups whose groups of characters are additive subgroups of the rational numbers with the discrete topology. In the case of the one dimensional universal solenoidal group $\Ss$, the group of characters is the whole discrete group $\Q$. 

The group $\Ss$ can be thought of as a generalized circle and it is isomorphic as a topological group to the \textsf{ad\`ele class group} of the rational numbers $\A/\Q$, which is the orbit space of the locally trivial $\Q$ -- bundle structure 
$\Q\hookrightarrow \A \To \A/\Q$, where $\A$ is the ad\`ele group of the rational numbers and $\Q\hookrightarrow \A$ is a discrete cocompact subgroup of $\A$. The ad\`ele class group is a fundamental arithmetic object in mathematics whose multiplicative part was invented by C. Chevalley for the purposes of simplifying and clarifying class field theory. This compact abelian group plays an essential role in the thesis of J. Tate (see \cite{Tat}) which laid the foundations for the Langlands program. 

By definition, $\Ss$ is a compact abelian topological group with a locally trivial $\Zz$ -- bundle structure $\Zz\hookrightarrow \Ss\To \UC$ and also a one dimensional foliated space whose leaves have a canonical affine structure isomorphic to the real one dimensional affine space $\mathbf{A}^1$. Here, $\displaystyle{\Zz := \varprojlim_{n\in \N} \Z/n\Z}$ is the profinite completion of the integers, and it is an abelian Cantor group. Thus, topologically, 
$\Ss$ is a compact and connected locally trivial fibration over the circle with fiber the Cantor set. 

More general objects are the so called solenoidal manifolds, which were introduced by Dennis Sullivan (see \cite{Sul} and \cite{Ver}). These solenoidal manifolds are Polish spaces with the property that each point has a neighborhood which is homeomorphic to an open interval times a Cantor set. He shows that any compact one dimensional \emph{orientable} solenoidal manifold is the suspension of a homeomorphism of the Cantor set. Examples of one dimensional solenoidal manifolds are one dimensional tiling spaces and one dimensional quasicrystals like the ones studied by R.F. Williams and L. Sadun, and also by J. Aliste -- Prieto (see \cite{WS} and \cite{Ali}). 

\begin{remark} 
In principle, Poincar\'e theory might be described for general compact, orientable one dimensional solenoidal manifolds. What makes the difference in our case is the fact that we can apply to these groups the Pontryagin duality and the classical theory of harmonic analysis for compact and locally compact Abelian groups.
\end{remark}

In the development of the article, the theory is first described for the ad\`ele class group of the rational numbers $\Ss$, since this is the paradigmatic example and all the ideas are already present there. This is done by using the notion of asymptotic cycles of S. Schwartzman (see \cite{Sch}). So the analysis is first done for the case of homeomorphisms of solenoids which are isotopic to the identity. After that it is treated the more general case of homeomorphisms isotopic to translations with the translation element not in the base leaf. 

We now briefly describe the definition of the rotation set and state the main theorem (see Sections \ref{rotation_set} and \ref{Poincare_theory}).  

Suppose that $f:\Ss\To \Ss$ is any homeomorphism isotopic to the identity which can be written as $f=\id + \varphi$, where $\varphi:\Ss\To \Ss$ is the displacement function along the one dimensional leaves of $\Ss$ with respect to the affine structure. The suspension of 
$f$ is the compact space
\[ \Susp := \Ss\times [0,1] /(z,1)\sim (f(z),0). \] 

Since $f$ is isotopic to the identity, it follows that $\Susp$ is homeomorphic to the product space $\Ss\times \UC$ which is a compact abelian topological group. The space $\Susp$ has a natural compact abelian group structure described very detailed in Section \ref{rotation_translation}. For the sake of simplicity, identify the product group structure of $\Ss \times \UC$ with the group structure on $\Susp$. So the character group of the suspension of $f$ is
$$
\Char(\Susp)\cong \Char(\Ss\times \UC)\cong \Char(\Ss)\times \Char(\UC)\cong \Q\times \Z.
$$

The associated suspension flow $\phi_t:\Susp\To\Susp$ is given by:
\[ \phi_t(z,s) = (f^m(z),t+s-m),\qquad (m\leq t+s < m+1). \]

Now, for any given character $\chi_{q,n}\in \Char(\Susp)$, there exists a unique 1 -- cocycle 
\[ C_{\chi_{q,n}}:\R\times \Susp\To \R \] 
associated to $\chi_{q,n}$ (see Section \ref{rotation_set} for complete information) such that
\[ \chi_{q,n}(\phi_t(z,s)) = \exp(2\pi iC_{\chi_{q,n}}(t,(z,s)))\cdot \chi_{q,n}(z,s), \] 
for every $(z,s)\in \Susp$ and $t\in \R$. Using the definition of $\chi_{q,n}$ and $\phi_t$ and comparing terms in the last equality one obtains an explicit expression for the 1 -- cocycle $C_{\chi_{q,n}}(t,(z,s))$. By Birkhoff's ergodic theorem, there is a well defined homomorphism
\[ H_f:\Char(\Susp)\To \R \] 
given by
\[ H_f(\chi_{q,n}) = \int_{\Susp} C_{\chi_{q,n}}(1,(z,s)) d\nu, \] 
where $\nu$ is a $\phi_t$--invariant Borel probability measure on $\Susp$. The explicit calculation of the 1 -- cocycle implies that the last integral only depends on an $f$ -- invariant Borel probability measure $\mu$. So, if $\Pp$ is the weak$^*$ compact convex space consisting of all such measure on $\Ss$, the well defined continuous homomorphism
\[ \rho_\mu(f):\Char(\Susp)\To \UC \] 
given by
\[ \rho_\mu(f)(\chi_{q,n}) := \exp(2\pi iH_f(\chi_{q,n})) \] 
determines an element in $\Char(\Char(\Susp))\cong \Ss\times \UC$ not depending on the second component. By Pontryagin's duality theorem, it defines an element 
$\rho_\mu(f)\in \Ss$ called the rotation element associated to $f$, which is the generalized Poincar\'e rotation number.

As expected, $\rho_\mu(f)$ is an element in the solenoid itself and it measures, in some sense, the average displacement of points under iteration of $f$ along the one dimensional leaves with the Euclidean metric.

If $\rho:\Pp\To \Ss$ is the map given by $\mu\mTo \rho_\mu(f)$, then $\rho$ is continuous from $\Pp$ to $\Ss$. Since $f$ is isotopic to the identity, the image $\rho(\Pp)$ is a compact subset of $\Ss$. The pathwise component of $\Ss$ are one dimensional leaves, so the set 
$\rho(\Pp)$ is a compact interval $I_f$, which, up to a translation, is contained in the one parameter subgroup $\Ll_0$. Now, $\Ll_0$ is canonically isomorphic to $\R$, so it is possible to identify $I_f$ with an interval in the real line. In particular, if $f$ is uniquely ergodic, then the interval $I_f$ reduces to a point and this case. \\

\noindent \textbf{Definition \ref{pseudoirrational_rotation}}
The homeomorphisms $f$ is a \textsf{pseudoirrational rotation} if $I_f$ consists of a single point $I_f=\{\alpha\}$. In this case, and only in this case, we call $\alpha$ the \emph{rotation element} of $f$. \\

In order to state the main result, some definitions are required. \\

First observe that, since $\Ss$ is torsion free, it follows that there does not exist a notion of ``rational'' and so it is only required to give a suitable definition of what ``irrational'' is. This goes as follows: \\

\noindent \textsf{Definition \ref{irrational_element}} 
The $\alpha\in \Ss$ is \textsf{irrational} if $\{n\alpha:n\in \Z\}$ is dense in $\Ss$. In classical terminology, $\Ss$ is said to be \textsf{monothetic} with generator $\alpha$. \\

Fixing a measure $\mu\in \Pp$ determines a rotation element $\rho_\mu(f)$ of $f$, simply denoted by $\rho(f)$. If $F$ is any lift of $f$ of the form
\[ F(t,k)=\left( F_k(t),k \right)\qquad (t,k)\in \R\times \Zz, \]
where $F_k:\R\To \R$ is a homeomorphism which depends continuously on $k\in \Zz$, the following definition seems appropriate (see Section \ref{Poincare_theory} for details). \\

\noindent \textbf{Definition \ref{bounded_meanvariation}} 
The homeomorphism $f$ has \textsf{bounded mean variation} if there exists $C>0$ such that the sequence $\{F_k^n(t) - t - n\tau(F)\}_{n\geq 1}$ is uniformly bounded by $C$, i.e. $C$ is independent of $(t,k)$. Here, $F_k^n$ is any lift of $f^n$ and $\tau(F)$ is a lifting of $\rho(f)$ to $\R\times \Zz$ contained in the same leaf as $(t,k)$. \\

The generalized semiconjugacy theorem can be stated as follows: \\

\noindent \textbf{Theorem \ref{Poincare_theorem}} 
If $f\in \Homeo_+(\Ss)$ is a pseudoirrational rotation with irrational rotation element 
$\rho(f)$, then $f$ is semiconjugated to the irrational rotation $R_{\rho(f)}$ if and only if $f$ has bounded mean variation. \\

The conjugacy question remains: \\

\noindent \textbf{Question \ref{question_conjugacy}} 
Under the same hypothesis of this theorem, is $f$ conjugated to the rotation $R_{\rho(f)}$ when $f$ is minimal? \\

The answer to this question and more dynamical results is the subject of recent investigation (see \cite{CV}). \\

For general one dimensional compact abelian solenoidal group $\Ss_G$, this theorem can be stated as:\\

\noindent \textbf{Theorem \ref{solenoidal_Poincare-theorem}}
Suppose that $f:\Ss_G\To \Ss_G$ is any homeomorphism isotopic to the identity, or isotopic to a rotation by an element not in the base leaf, with irrational rotation element $\rho(f)$. $f$ is semiconjugated to the irrational rotation $R_{\rho(f)}$ if and only if $f$ has bounded mean variation. \\

Closed related to this work is the article by J. Kwapisz (see \cite{Kwa}) who gives a definition of a rotation element for homeomorphisms of the real line with almost periodic displacement. When the displacement is limit periodic, the corresponding convex hull is a compact abelian one dimensional solenoidal group. However, we consider $\Ss$ as a ``generalized circle'' (i.e. a compact abelian one dimensional pro -- Lie group) and we develop the theory from this perspective.  

Other similar studies of the Poincar\'e theory have been developed very recently by several authors. In the article \cite{Jag}, T. J\"ager proved that a minimal homeomorphism of the 
$d$ -- dimensional torus is semiconjugated to an irrational rotation if and only if it is a pseudoirrational rotation with bounded mean motion (see also \cite{AJ} and \cite{BCJL}).
\smallskip

The article is organized as follows. Section \ref{universal_solenoid} defines the algebraic universal covering space of the circle, its character group, the suspension of a homeomorphism isotopic to the identity and its corresponding character group. Section \ref{rotation_set} introduces the notion of 1--cocycle and gives the definition of the generalized rotation set and element. In order to define this generalized rotation element 
$\rho(f)$ it is necessary to use the following ingredients: Pontryagin's duality theory for compact Abelian groups, the Bruschlinsky--Eilenberg homology theory and Schwartzman theory of asymptotic cycles as well as the notion of a 1--cocycle and ergodic theory. The semiconjugacy theorem is proved in Section \ref{Poincare_theory}. Finally, Section \ref{rotation_translation} introduces a general definition for the case of homeomorphisms isotopic to translations whose rotation element is not in the base leaf and for homeomorphisms of general compact abelian one dimensional solenoidal groups.

\section[The algebraic universal covering space of the circle]{The algebraic universal covering space of the circle}
\label{universal_solenoid}

This Section introduces the algebraic universal covering space of the circle, its character group, the suspension of a homeomorphism isotopic to the identity and its corresponding character group.

\subsection[The universal one dimensional solenoid]{The universal one dimensional solenoid}

It is well known, by covering space theory, that for any integer $n\geq 1$, it is defined the unbranched covering space of degree $n$, $p_n:\UC \To \UC$ given by $z\mTo {z^n}$. If $n,m\in \Z^+$ and $n$ divides $m$ then there exists a unique covering map 
$p_{nm}:\UC\To \UC$ such that $p_n \circ p_{nm} = p_m$. This determines a projective system of covering spaces $\{\UC,p_n\}_{n\geq 1}$ whose projective limit is the \textsf{universal one dimensional solenoid}
\[ \Ss := \varprojlim_{n\in \N} \{\UC,p_n\} \] 
with canonical projection $\Ss\To \UC$ determined by projection onto the first coordinate, with a locally trivial $\Zz$--bundle structure $\Zz\hookrightarrow \Ss \To\UC$. Here, 
$\displaystyle{\Zz := \varprojlim_{n\in \N} \Z/n\Z}$ is the profinite completion of $\Z$, which is a compact, perfect and totally disconnected Abelian topological group homeomorphic to the Cantor set. Being $\Zz$ the profinite completion of $\Z$, it admits a canonical inclusion of $\Z$ whose image is dense.

$\Ss$ can also be realized as the orbit space of the $\Q$--bundle structure 
$\Q \hookrightarrow \A \To \A/\Q$, where $\A$ is the ad\`ele group of the rational numbers which is a locally compact Abelian group, $\Q$ is a discrete subgroup of $\A$ and $\A/\Q \cong \Ss$ is a compact Abelian group (see \cite{RV}). From this perspective, 
$\A/\Q$ can be seen as a projective limit whose $n$--th component corresponds to the unique covering space of degree $n\geq 1$ of $\UC$. $\Ss$ is also called the \textsf{algebraic universal covering space} of the circle $\UC$. The Galois group of the covering is $\Zz$, the \textsf{algebraic fundamental group} of $\UC$.

By considering the properly discontinuously free action of $\Z$ on $\R\times \Zz$  given by
\[ \gamma\cdot (t,k) := (t+\gamma,k-\gamma) \quad (\gamma\in \Z), \]
$\Ss$ is identified with the orbit space $\R\times_{\Z} \Zz$. Here, $\Z$ is acting on $\R$ by covering transformations and on $\Zz$ by translations. The path connected component of the identity element $0\in \Ss$ is called the \textsf{base leaf} and will be denoted by 
$\Ll_0$. Clearly, $\Ll_0$ is the image of $\R\times \{0\}$ under the canonical projection 
$\R\times \Zz\To \Ss$ and it is homeomorphic to $\R$.

In summary, $\Ss$ is a compact, connected, Abelian topological group and also a one dimensional lamination where each ``leaf" is a simply connected one dimensional manifold, homeomorphic to the universal covering space $\R$ of $\UC$, and a typical ``transversal" is isomorphic to the Cantor group $\Zz$. $\Ss$ also has a leafwise 
$\Cont^\infty$ Riemannian metric (i.e., $\Cont^\infty$ along the leaves) which renders each leaf isometric to the real line with its standard metric. So, it makes sense to speak of a rigid translation along the leaves. The leaves also have a natural order equivalent to the order of the real line.

\subsubsection*[Characters of $\Ss$]{Characters of $\Ss$}
\label{charactersS}

Denote by $\Char(\Ss):=\Hom_{\text{cont}}(\Ss,\UC)$ the topological group endowed with the uniform topology consisting of all continuous homomorphisms from $\Ss$ into the multiplicative group $\UC$. This group is called the \textsf{Pontryagin dual} of $\Ss$ or, the \textsf{Character group} of $\Ss$. From what has been said before, $\Ss\cong \A/\Q$ and, since $\A$ is selfdual (i.e., $\A\cong \Char(\A)$), it follows that $\Char(\Ss)\cong \Q$. If $\check{H}^1(\Ss,\Z)$ denotes the first $\mathrm{\check{C}}$ech cohomology group of $\Ss$ with coefficients in $\Z$, since $\Ss$ is a compact connected Abelian group,
$\check{H}^1(\Ss,\Z)\cong \Char(\Ss)$ (See \cite{Ste}).

If $\chi:\Ss\To \UC$ is any character, $\chi$ is completely determined by its values when restricted to the dense one parameter subgroup $\Ll_0$. Since $\Ll_0$ is canonically isomorphic to the additive group $(\R,+)$, the restriction of $\chi$ to $\Ll_0$ is of the form $t\mTo \exp(2\pi its)$. Since 
$\Char(\Ss)\cong \Q$, $s$ must be rational. Now, given any $z\in \Ss$, there exists an element $n\in \Zz \subset \Ss$ (which can be approximated by integers) such that 
$z+n \in \Ll_0$. By continuity, the value of the character $\chi$ at $z$ is 
\[ \chi(z) = \exp(2\pi iq(z+n)) = \exp(2\pi iqz) \] 
for any $q\in \Q$. In this situation we will write $\chi(z) = \Exp{(2\pi iqz)}$.

\subsubsection*[Homeomorphisms of $\Ss$]{Homeomorphisms of $\Ss$}

We only consider the group which consists of all homeomorphisms of $\Ss$ which are isotopic to the identity and can be written as $f = \id + \varphi$, where 
$\varphi:\Ss\To \Ss$ is given by $\varphi(t)=f(t)-t$ and describes the displacement of points $t\in \Ss$ along the leaf containing it. The symbol ``--'' refers to the additive group operation in the solenoid. Denote the set of all such functions $\varphi$ by 
$\Cont_+(\Ss)$.

Since $f$ is a homeomorphism preserving the order in the leaves, it follows that there is a one to one correspondence between $\Cont_+(\Ss)$ and the set of real valued continuous functions with the property that if $x$ and $y$ are in the same one dimensional leaf and if $t<s$, then $t+\varphi(t)<s+\varphi(s)$. Therefore, $\Cont_+(\Ss)$ can be identified with the Banach space of real valued continuous functions $\Cont(\Ss,\R)$. 

As mentioned in the Introduction, the solenoid has a leafwise $\Cont^\infty$ Riemannian metric (i.e., $\Cont^\infty$ along the leaves) which renders each leaf isometric to the real line with its standard metric. Hence, the displacement function $\varphi$ can be thought of as a continuous real valued function which we denote with the same symbol $\varphi$. In fact, since every leaf $\Ll\subset \Ss$ is dense, the restriction of this function to $\Ll$, denoted by $\varphi_{\Ll}$, completely determines the function. Furthermore, 
$\varphi_{\Ll}$ is an almost periodic function whose convex hull is the solenoid and thus, 
$\varphi_{\Ll}$ is a limit periodic function (see \cite{Pon}).

\begin{remark}
Denote by $\Homeo_+(\Ss)$ the group of all homeomorphisms $f:\Ss\To \Ss$ which are isotopic to the identity and can be written as $f = \id + \varphi$, with 
$\varphi\in \Cont_+(\Ss)$; i.e.,
\[ \Homeo_+(\Ss) = \{f\in \Homeo(\Ss) : f = \id + \varphi, \; \varphi\in \Cont_+(\Ss)\}. \]
\end{remark}

\subsection[The suspension of a homeomorphism]{The suspension of a homeomorphism}
\label{suspension_homeomorphism}

Let $f:\Ss\To \Ss$ be any homeomorphism isotopic to the identity. On $\Ss\times \R$ define the $\Z$--action:
\[ \Z\times (\Ss\times \R) \To \Ss\times \R, \qquad (n,(z,t))\mTo (f^n(z),t+n). \]

As usual, being in the same orbit defines an equivalence relation in the space 
$\Ss\times \R$ and if $(z,t)$ is any point in $\Ss\times \R$, denote by 
\[ [(z,t)] = \{ (f^n(z),t+n) : n\in \Z \} = \{ (z+n\alpha,t+n) : n\in \Z \} \] 
the equivalence class of the point, i.e. its $\Z$--orbit. The orbit space $\Susp$ of this action is the \textsf{suspension} of $f$.

Take any two points $[(z,t)]$ and $[(w,s)]$ in $\Susp$ and define the following operation:
\[ [(z,t)]\cdot [(w,s)] :=  [(z+w+(n+m)\alpha,t+s+(n+m))] = [(z+w,t+s)]. \]

Since $f$ is isotopic to the identity, it follows that $\Susp$ is homeomorphic to the product space $\Ss\times \UC$, which is a compact Abelian topological group. In $\Susp$ there is a well defined flow 
\[ \phi:\R\times \Susp\To \Susp \] 
called the \textsf{suspension flow} of $f$, given by
\[ \phi(t,(z,s)) = (f^m(z),t+s-m), \] 
if $m\leq t+s \leq m+1$. The canonical projection $\pi:\Ss\times [0,1]\To \Susp$ sends 
$\Ss\times \{0\}$ homeomorphically onto its image $\pi(\Ss\times \{0\})\equiv \Ss$ and every orbit of the suspension flow intersects $\Ss$. The orbit of any $(z,0)\in \Susp$ must coincide with the orbit $\phi_t(z,0)$ at time $0\leq t\leq T$ for $T$ an integer.

\subsubsection*[Characters of the suspension]{Characters of the suspension}

Denote by $\Cont(\Susp,\UC)$ the topological space which consists of all continuous functions defined on $\Susp$ with values in the unit circle $\UC$ with the topology of uniform convergence on compact sets (i.e., the compact and open topology). Clearly, this is an Abelian topological group under pointwise multiplication. The subset 
$R(\Susp,\UC)\subset \Cont(\Susp,\UC)$ which consists of continuous functions 
$h:\Susp\To \UC$ that can be written as $h(z,s) = \exp(2\pi i\psi(z,s))$ with 
$\psi:\Susp\To \R$ a continuous function, is a closed subgroup. Hence, the quotient group 
$\Cont(\Susp,\UC)/R(\Susp,\UC)$ is a topological group. By Bruschlinsky--Eilenberg's theory (see\cite{Sch}), it is known that
\[ \check{H}^1(\Susp,\Z)\cong \Cont(\Susp,\UC)/R(\Susp,\UC). \]

Since
\[ \check{H}^1(\Susp,\Z)\cong \Char(\Susp), \] 
it follows that
\[ \Char(\Susp)\cong \Cont(\Susp,\UC)/R(\Susp,\UC). \]

On the other hand, using the algebraic structure of the product group $\Ss\times \UC$, the character group of the suspension is given by
\[ \Char(\Susp)\cong \Char(\Ss)\times \Char(\UC)\cong \Q\times \Z. \]

According with the definition of $\Exp$ in Subsection \ref{charactersS}, given any element 
$(q,n)\in \Q\times \Z$, the corresponding character $\chi_{q,n}\in \Char(\Susp)$ can be written as
\begin{align*}
\chi_{q,n}(z,s)	&= \Exp(2\pi iqz)\cdot \exp(2\pi ins)\\
						&= \Exp(2\pi i(qz+ns)),
\end{align*}
for any $(z,s)\in \Susp$.

\subsubsection*[Measures]{Measures}

Given any $f$--invariant Borel probability measure $\mu$ on $\Ss$ and $\lambda$ the usual Lebesgue measure on $[0,1]$, the product measure $\mu\times \lambda$ leads to define a $\phi_t$--invariant Borel probability measure on $\Susp$. Reciprocally, given any $\phi_t$--invariant Borel probability measure $\nu$ on $\Susp$, it can be defined, by disintegration with respect to the fibers, an $f$--invariant Borel probability measure $\mu$ on $\Ss$. Denote by $\Pp$ the weak$^*$ compact convex space of $f$--invariant Borel probability measures defined on $\Ss$.

\section[The rotation set]{The rotation set}
\label{rotation_set}

This Section presents the notion of 1 -- cocycle and gives the definition of the generalized rotation set and element. In order to define this generalized rotation element  $\rho(f)$ associated to a homeomorphism $f:\Ss\To \Ss$ isotopic to the identity, it is necessary to use Pontryagin's duality theory for compact abelian groups, the Bruschlinsky -- Eilenberg homology theory and Schwartzman theory of asymptotic cycles as well as the notion of a 1--cocycle and ergodic theory.

\subsection[1--cocycles]{1--cocycles}

A 1--\textsf{cocycle} associated to the suspension flow $\phi_t$ is a continuous function
\[ C:\R\times \Susp\To \R \]
which satisfies the relation
\[ C(t+u,(z,s)) = C(u,\phi_t(z,s)) + C(t,(z,s)), \] 
for every $t,u\in \R$ and $(z,s)\in \Susp$. The set which consists of all 1--cocycles associated to 
$\phi_t$ is an Abelian group denoted by $\Cont^1(\phi)$. A 1--\textsf{coboundary} is the 1--cocyle determined by a continuous function $\psi:\Susp\To \R$ such that
\[ C(t,(z,s)):= \psi(z,s) - \psi(\phi_t(z,s)). \] 

The set of 1--coboundaries $\Gamma^1(\phi)$ is a subgroup of $\Cont^1(\phi)$ and the quotient group
\[ H^1(\phi):=\Cont^1(\phi)/\Gamma^1(\phi), \]
is called the 1--\textsf{cohomology group} associated to $\phi_t$. The proof of the next proposition (for an arbitrary compact metric space) can be seen in \cite{Ath}.

\begin{proposition}
\label{associated_cocycle} 
For every continuous function $h:\Susp\To \UC$ there exists a unique 1--cocycle 
$C_h:\R\times \Susp\To \R$ associated to $h$ such that
\[ h(\phi_t(z,s)) = \exp(2\pi iC_h(t,(z,s)))\cdot h(z,s), \] 
for every $(z,s)\in \Susp$ and $t\in \R$.
\end{proposition}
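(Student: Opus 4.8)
The plan is to construct $C_h$ by lifting, through the covering map $\exp(2\pi i\,\cdot):\R\To\UC$, the continuous $\UC$--valued function that records how $h$ is rotated along the flow lines of $\phi_t$. First I would define
\[ G:\R\times\Susp\To\UC,\qquad G(t,(z,s)):=h(\phi_t(z,s))\cdot\overline{h(z,s)}, \]
which is jointly continuous because the suspension flow is jointly continuous in $(t,(z,s))$ and $h$ is continuous, and which satisfies $G(0,(z,s))=1$ for every $(z,s)\in\Susp$. Since $h$ takes values in $\UC$, the desired relation $h(\phi_t(z,s))=\exp(2\pi iC_h(t,(z,s)))\cdot h(z,s)$ is equivalent to $\exp(2\pi iC_h(t,(z,s)))=G(t,(z,s))$, so the whole problem reduces to producing a continuous real lift of $G$ that vanishes at $t=0$.

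Second, I would obtain this lift from the homotopy lifting property of the covering $\exp(2\pi i\,\cdot):\R\To\UC$. Regarding $\Susp$ as a parameter space and $t$ as the homotopy variable, the constant lift $(z,s)\mapsto 0$ of $G(0,\cdot)\equiv 1$ extends uniquely to a continuous lift over $\Susp\times[0,1]$; iterating over the intervals $[n,n+1]$ and $[-n-1,-n]$ and gluing by uniqueness of lifts yields a single continuous function $C_h:\R\times\Susp\To\R$ with $C_h(0,(z,s))=0$ and $\exp(2\pi iC_h(t,(z,s)))=G(t,(z,s))$. This is the step I expect to carry the technical weight: the defining relation and the cocycle identity are afterwards purely formal, but the joint continuity of the lift is exactly what makes $C_h$ a genuine element of $\Cont^1(\phi)$ rather than a merely fiberwise object, and it rests on the fact that the covering map is a fibration, so that the homotopy lifting property applies with the compact parameter space $\Susp$.

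Third, I would verify the cocycle identity. Using the flow law $\phi_u\circ\phi_t=\phi_{t+u}$ together with $\overline{h}\cdot h\equiv 1$, one computes
\[ \exp\!\big(2\pi i[C_h(u,\phi_t(z,s))+C_h(t,(z,s))]\big)=G(u,\phi_t(z,s))\cdot G(t,(z,s))=G(t+u,(z,s))=\exp\!\big(2\pi iC_h(t+u,(z,s))\big). \]
Hence the two continuous real functions $u\mapsto C_h(t+u,(z,s))$ and $u\mapsto C_h(u,\phi_t(z,s))+C_h(t,(z,s))$ differ by an integer for every $u$; since their difference is continuous and integer--valued on the connected set $\R$ and vanishes at $u=0$, it is identically zero, which is precisely the cocycle relation.

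Finally, uniqueness is immediate: if $C'$ is any $1$--cocycle satisfying the stated relation, then $\exp(2\pi iC'(t,(z,s)))=G(t,(z,s))=\exp(2\pi iC_h(t,(z,s)))$, so $C'-C_h$ is a continuous integer--valued function on the connected space $\R\times\Susp$, hence a constant. Evaluating the cocycle identity for $C'$ at $t=u=0$ forces $C'(0,\cdot)=0=C_h(0,\cdot)$, so this constant is $0$ and $C'=C_h$.
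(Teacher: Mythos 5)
Your proof is correct. The paper itself contains no argument for this proposition --- it defers to the reference \cite{Ath}, where the statement is proved for flows on arbitrary compact metric spaces --- and your construction is precisely that standard proof: lift the circle--valued function $G(t,(z,s))=h(\phi_t(z,s))\cdot\overline{h(z,s)}$ through the covering $\exp(2\pi i\,\cdot):\R\To\UC$ via the homotopy lifting property, then get the cocycle identity and uniqueness from continuity, integer--valuedness and connectedness. One inessential remark: compactness of $\Susp$ plays no role in the lifting step, since covering maps have the homotopy lifting property with respect to arbitrary parameter spaces; what your argument genuinely needs (and correctly uses) is only the connectedness of $\Susp$ and of $\R$.
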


This proposition implies that there is a well defined homomorphism
\[ \Char(\Susp)\cong \check{H}^1(\Susp,\Z)\To H^1(\phi) \] 
by sending any character $\chi_{q,n}\in \Char(\Susp)$ to the cohomology class $[C_{\chi_{q,n}}]$, where $C_{\chi_{q,n}}$ is the unique 1--cocycle associated to $\chi_{q,n}$.

Applying the above proposition to any nontrivial character $\chi_{q,n}\in \Char(\Susp)$ the following relation is obtained:
\[ \chi_{q,n}(\phi_t(z,s)) = \exp(2\pi iC_{\chi_{q,n}}(t,(z,s)))\cdot \chi_{q,n}(z,s). \] 

Using the explicit expressions for the characters on both sides of the above equation, the next equalities hold
\begin{align*}
\chi_{q,n}(\phi_t(z,s))		&= \chi_{q,n}(f^m(z),t+s-m)\\
									&= \Exp(2\pi i(qf^m(z) + n(t+s-m)))\\
									&= \Exp(2\pi i(qf^m(z) + nt + ns))
\end{align*} and

\[ \chi_{q,n}(z,s) = \Exp(2\pi i(qz+ns)). \]

Comparing these two expressions one gets
\begin{equation}
\label{cocycle1} 
C_{\chi_{q,n}}(t,(z,s)) = q(f^m(z)-z) + nt.
\end{equation}

Now recall that $f:\Ss\To \Ss$ is a homeomorphism isotopic to the identity of the form 
$f=\id + \varphi$, where $\varphi:\Ss\To \Ss$ is the displacement function, where, as described before, $\varphi$ can also be considered as a real valued function on the solenoid. If $t=1$, then $m=1$ and the 1--cocycle at time $t=1$ is
\begin{equation}
\label{time1cocycle} 
C_{\chi_{q,n}}(1,(z,s)) = q\varphi(z) + n.
\end{equation}

\subsection[The rotation element]{The rotation element}

If $\nu$ is any $\phi_t$--invariant Borel probability measure on $\Susp$, by Birkhoff's ergodic theorem there is a well defined homomorphism $H^1(\phi)\To \R$ given by
\[ [C_\chi]\mTo \int_{\Susp} C_\chi(1,(z,s)) d\nu. \]

Now, composing the two homomorphisms
\[ \Char(\Susp)\To H^1(\phi)\To \R \] 
it is obtained a well defined homomorphism $H_{f,\nu}:\Char(\Susp)\To \R$ given by
\[ H_{f,\nu}(\chi_{q,n}) := \int_{\Susp} C_{\chi_{q,n}}(1,(z,s)) d\nu. \]

Denote by $\mu$ the $f$--invariant Borel probability measure on $\Ss$ obtained by disintegration of 
$\nu$ with respect to the fibers. Evaluating the above integral using equation (\ref{time1cocycle}) gives
\begin{align*}
H_{f,\nu}(\chi_{q,n})	&= \int_{\Susp} (q\varphi + n)d\nu\\
								&= q\int_\Ss \varphi d\mu + n.
\end{align*}

Hence $H_{f,\nu}$ determines an element in $\Hom(\Char(\Susp),\R)$ for each measure $\nu$ in 
$\Susp$, and therefore, for each measure $\mu\in \Pp$. Hence one gets a well defined function
\[ H_f:\Pp\To \Hom(\Char(\Susp),\R) \] 
given by $\mu\mTo H_{f,\mu}$, where $H_{f,\mu}$ is
\[ H_{f,\mu}(\chi_{q,n}) = q\int_\Ss \varphi d\mu + n. \]

By composing $H_f$ with the continuous homomorphism
\[ \Hom(\Char(\Susp),\R)\To \Char(\Char(\Susp)) \] 
given by
\[ H_{f,\mu}\mTo \pi\circ H_{f,\mu}, \] 
where $\pi:\R\To \UC$ is the universal covering projection, we obtain a well defined continuous function $\rho:\Pp\To \Char(\Char(\Susp))$ given by
\[ \mu\mTo \rho_\mu := \pi\circ H_{f,\mu}. \]

That is, for each $\mu\in \Pp$, there exists a well defined continuous homomorphism 
\[ \rho_\mu:\Char(\Susp)\To \UC \] 
given by
\begin{align*}
\rho_\mu(\chi_{q,n})	&:= \exp(2\pi iH_{f,\mu}(\chi_{q,n}))\\
								&= \exp \left(2\pi iq \int_\Ss \varphi d\mu\right).\\
\end{align*}

By Pontryagin's duality theorem,
\[ \Char(\Char(\Susp))\cong \Susp \]
and therefore $\rho_\mu\in \Susp$. Since $\Susp\cong \Ss\times \UC$ and 
$\rho_\mu(\chi_{q,n})=\rho_\mu(\chi_{q,0})$, it follows that $\rho_\mu$ does not depend on the second component and so, the identification $\rho_\mu = (\rho_\mu,1)\in \Ss\times \UC$ can be made. More precisely, it is well known that every nontrivial character of $\Char(\Ss)\cong \Q$ is of the form 
$\chi_a$ for some $a\in \A$ and the map $\A\To \Char(\Q)$ given by $a\mTo \chi_a$ induces an isomorphism $\Char(\Q)\cong \A/\Q\cong \Ss$. This produces a genuine element $\rho_\mu\in \Ss$.

\begin{definition}
The element $\rho_\mu(f) := \rho_\mu \in \Ss$ defined above is the \emph{\textsf{rotation element}} associated to $f$ with respect to the measure $\mu$.
\end{definition}

\begin{remark} 
By definition, $\rho_\mu(f)$ can be identified with the element $\int_\Ss \varphi d\mu$ in the solenoid $\Ss$ determined by the character of $\Q$ given by 
\[ q\mTo \Exp \left(2\pi iq\int_\Ss \varphi d\mu \right). \] 

That is, $\rho_\mu(f)$ is \emph{solenoid -- valued}.
\end{remark}

If $\rho:\Pp\To \Ss$ is the map given by $\mu\mTo \rho_\mu(f)$, then $\rho$ is continuous from $\Pp$ to $\Ss$. Since $\Pp$ is compact and convex, and $f$ is isotopic to the identity, the image $\rho(\Pp)$ is a compact subset of $\Ss$.

\begin{definition}
\label{rotationset}
The \emph{\textsf{rotation set}} of $f$ is $\rho(\Pp)$.
\end{definition}

Since the pathwise component of $\Ss$ are one dimensional leaves, this set $\rho(\Pp)$ is a compact interval $I_f$, which, up to a translation, is contained in the one parameter subgroup $\Ll_0$. Since $\Ll_0$ is canonically isomorphic to $\R$, it is possible to identify $I_f$ with an interval in the real line. In particular, if $f$ is uniquely ergodic, then the interval $I_f$ reduces to a point and the rotation element is a unique element of $\Ss$.

\begin{remark}
The rotation element of $f$ can be interpreted as the exponential of an asymptotic cycle, in the sense of Schwartzman, of the suspension flow $\{\phi_t\}_{t\in\R}$ of $f$ (see \cite{Sch}; see also \cite{AK}, \cite{Pol}). If $A_\nu\in \Hom(\check{H}^1(\Susp,\Z),\R)=\Hom(\Char(\Susp),\R)$ denotes the asymptotic cycle associated to the $\{\phi_t\}_{t\in\R}$--invariant measure $\nu$, then 
$\rho_\nu(f)=\exp(2\pi iA_\nu)$.
\end{remark}

\begin{remark}
From Birkhoff's ergodic theorem, for any ergodic $f$--invariant measure $\mu$,
$$ 
\int_\Ss \varphi d\mu = \underset{n\to\infty}\lim \ \frac{1}{n} \sum_{j=0}^n \varphi(f^j(z)),
$$
for $\mu$--almost every point $z\in\Ss$. We could have used this to define the rotation element with respect to an (ergodic) measure. Since we wanted to make explicit the role of the measure, we used the theory of asymptotic cycles in the sense of Schwartzman. (Compare \cite{Kwa}, Theorem 3.)
\end{remark}

\subsection[Basic example and properties]{Basic example and properties}

\subsubsection*[Basic example: Rotations]{Basic example: Rotations}
%\label{rotation} 

Let $\alpha$ be any element in $\Ll_0\subset \Ss$ and consider the rotation $R_\alpha:\Ss\To \Ss$ given by $z\mTo z + \alpha$. The suspension flow $\phi_t:\Ss\times \UC\To \Ss\times \UC$ is given by
\[ \phi_t(z,s) = (z+m\alpha,t+s-m), \] 
if $m\leq t+s< m+1$. If $\chi_{q,n}\in \Char(\Susp)$ is any nontrivial character then
\[ H_{R_\alpha,\mu}(\chi_{q,n}) = q\int_\Ss \alpha d\mu + n = q\alpha + n. \]
This implies that
\[ \rho_\mu(R_\alpha)(\chi_{q,n}) = \exp(2\pi i(q\alpha + n)) = \exp(2\pi iq\alpha) \] 
and $\rho_\mu(R_\alpha) = \alpha$.

\subsubsection*[Properties]{Properties}

\begin{enumerate}

\item \textsf{(Invariance under conjugation)} Let $f$ and $g$ be any two homeomorphisms isotopic to the identity and $h=\id+\psi$. If $h\circ f=g\circ h$ then $\rho_\mu(f)=\rho_\mu(g)$. In particular, if $f$ is conjugated to a rotation $R_\alpha$ then $\rho_\mu(f)=\alpha$.

\begin{proof} 
Observe first that $h\circ f=g\circ h$ implies that $h\circ f^m=g^m\circ h$ and 
$f^m + \psi\circ f^m=g^m\circ h - h + h$. That is
\[ f^m-\id = (g^m-\id)\circ h + \psi - \psi\circ f^m. \] 
Therefore the 1--cocycle associated to any nontrivial character $\chi_{q,n}$ at time $t=1$ has the form
\begin{align*}
C_{\chi_{q,n}}(1,(z,s))	&= q(f(z)-z) + n\\
									&= q[(g(h(z))-h(z)) + \psi(z) - \psi\circ f(z)] + n.
\end{align*} 
Since $\mu$ is both $f$ and $g$ invariant, we get

\begin{align*}
H_{f,\mu}(\chi_{q,n})	&= q\int_\Ss (f(z)-z)d\mu + n\\
									&= q\int_\Ss (g(h(z))-h(z))dh_*\mu + n\\
									&= H_{g,\mu}(\chi_{q,n}).
\end{align*} 
Hence, $\rho_\mu(f)=\rho_\mu(g)$.
\end{proof}

\item \textsf{(Continuity)} The function $\rho_\mu:\Homeo_+(\Ss)\To \Ss$ given by
\[ f=\id+\varphi\longmapsto \int_\Ss \varphi d\mu \] 
is continuous with respect to the uniform topology in $\Homeo_+(\Ss)$.

\item \textsf{(The rotation element is equal to zero if and only if $f$ has a fixed point)} Indeed, if $f$ has a fixed point $x$ then $\varphi(x)=0$; if $\mu=\delta_x$ is the Dirac mass at $x$ then 
$\int_\Ss \varphi d\mu=0$ and therefore $\rho_\mu(f)=0$. On the other hand, if $\rho_\mu(f)=0$ then $\int_\Ss \varphi d\mu=0$ and $\varphi$ must vanish at some point $x$ which must be a fixed point of $f$.

\end{enumerate}

\subsection[The rotation element \`a la de Rham]{The rotation element \`a la de Rham}
%\label{deRham}

If $d\lambda$ denotes the usual Lebesgue measure on $\UC$, then, given any character 
$\chi_{q,n}\in \Char(\Susp)$ there is a well defined closed differential one form on $\Susp$ given by
\[ \omega_{\chi_{q,n}} := \chi_{q,n}^* d\lambda. \]

Let $X$ be the vector field tangent to the flow $\phi_t$ and let $\nu$ be any $\phi_t$--invariant Borel probability measure on $\Susp$. Define
\[ H_{f,\nu}:\Char(\Susp)\To \R \]
by
\[ H_{f,\nu}(\chi_{q,n}) := \int_{\Susp} \omega_{\chi_{q,n}}(X) d\nu \]
and observe that this definition only depends on the cohomology class of $\omega_{\chi_{q,n}}$ and the measure class of $\nu$. Hence, there is a well defined continuous homomorphism 
$\rho(f):\Char(\Susp)\To \UC$ given by
\[ \rho(f)(\chi_{q,n}) := \exp(2\pi iH_{f,\nu}(\chi_{q,n})). \]

Thus, as before,
\[ \rho(f)\in \Char(\Char(\Susp))\cong \Susp. \]

\begin{proposition} 
$\rho(f)$ is the rotation element associated to $f$ corresponding to $\nu$.
\end{proposition}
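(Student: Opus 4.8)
The plan is to show that the two constructions of the intermediate homomorphism $H_{f,\nu}\colon\Char(\Susp)\To\R$ coincide character by character; once this is established, post--composing with $\exp(2\pi i\,\cdot\,)$ and invoking Pontryagin duality $\Char(\Char(\Susp))\cong\Susp$ exactly as in the previous subsection identifies $\rho(f)$ with the earlier element $\rho_\nu(f)$ as the same element of $\Ss$. Thus it suffices to prove, for every character $\chi_{q,n}\in\Char(\Susp)$, the equality
\[
\int_{\Susp}\omega_{\chi_{q,n}}(X)\,d\nu \;=\; \int_{\Susp}C_{\chi_{q,n}}(1,(z,s))\,d\nu,
\]
where $C_{\chi_{q,n}}$ is the unique $1$--cocycle attached to $\chi_{q,n}$ by Proposition \ref{associated_cocycle}.

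The central observation I would use is that the cocycle $C_{\chi_{q,n}}$ is exactly the integral of $\omega_{\chi_{q,n}}(X)$ along the orbits of $\phi_t$. Writing $a(z,s):=\omega_{\chi_{q,n}}(X)(z,s)$, I note that since $\omega_{\chi_{q,n}}=\chi_{q,n}^{*}\,d\lambda$, the function $a$ is precisely the rate of change of the lifted phase of $\chi_{q,n}$ in the flow direction. Comparing this with the defining relation $\chi_{q,n}(\phi_t(z,s))=\exp(2\pi i\,C_{\chi_{q,n}}(t,(z,s)))\,\chi_{q,n}(z,s)$ of Proposition \ref{associated_cocycle}, both $C_{\chi_{q,n}}(t,(z,s))$ and $\int_0^t a(\phi_u(z,s))\,du$ are continuous in $t$, vanish at $t=0$, and have the same exponential for every $t$; their difference is therefore a continuous $\Z$--valued function of $t$, hence identically zero. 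This yields $C_{\chi_{q,n}}(t,(z,s))=\int_0^t a(\phi_u(z,s))\,du$.

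Setting $t=1$, integrating over $\Susp$ against $\nu$, and using Fubini together with the $\phi_u$--invariance of $\nu$ gives
\[
\int_{\Susp}C_{\chi_{q,n}}(1,(z,s))\,d\nu
=\int_0^1\!\!\int_{\Susp}a(\phi_u(z,s))\,d\nu\,du
=\int_0^1\!\!\int_{\Susp}a\,d\nu\,du
=\int_{\Susp}\omega_{\chi_{q,n}}(X)\,d\nu,
\]
which is the required identity and finishes the comparison.

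The step I expect to be the main obstacle is the regularity underlying the key identity: to make sense of $\omega_{\chi_{q,n}}(X)$ pointwise and to justify the fundamental--theorem--of--calculus computation, one must work with the genuine leafwise smooth character of the compact group $\Susp$ rather than with the piecewise expression whose associated cocycle $q(f^m(z)-z)+nt$ is only piecewise continuous in $t$. I would resolve this by recording that both functionals depend only on the appropriate classes: $\int_{\Susp}C(1,\cdot)\,d\nu$ is unchanged when $C$ is modified by a $1$--coboundary (by $\phi_1$--invariance of $\nu$), and $\int_{\Susp}\omega(X)\,d\nu$ depends only on the de Rham class of $\omega$ and the measure class of $\nu$, as already observed when $H_{f,\nu}$ was defined \`a la de Rham. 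Hence one may replace $\chi_{q,n}$ by its smooth representative and apply the computation above without affecting either integral.
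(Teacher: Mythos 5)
The paper offers no proof of this proposition at all --- the statement is followed immediately by the example of rotations --- so you are not paralleling or diverging from an existing argument; you are supplying the argument the paper omits. What you supply is correct, and it is the standard Schwartzman-type computation that the paper's construction and its remark on asymptotic cycles (with the reference \cite{Ath}) have in mind. Your reduction to the single identity $\int_{\Susp}\omega_{\chi_{q,n}}(X)\,d\nu=\int_{\Susp}C_{\chi_{q,n}}(1,(z,s))\,d\nu$ is the right one, since applying $\exp(2\pi i\,\cdot)$ and Pontryagin duality then identifies the two elements of $\Susp\cong\Ss\times\UC$, both of which are independent of the circle factor. Your two steps --- the pointwise identification $C_{\chi_{q,n}}(t,(z,s))=\int_0^t\omega_{\chi_{q,n}}(X)(\phi_u(z,s))\,du$ via the continuous, integer-valued, vanishing-at-zero difference, then Fubini and $\phi_u$-invariance of $\nu$ --- are both sound. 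Your regularity caveat is also genuinely needed, and here you are more careful than the paper: the explicit formula $C_{\chi_{q,n}}(t,(z,s))=q(f^m(z)-z)+nt$ of equation \eqref{cocycle1} is only piecewise continuous in $t$ (it mixes suspension coordinates with the product-coordinate expression for the character), so the fundamental-theorem-of-calculus step must be run on a flow-smooth representative of the Bruschlinsky class of $\chi_{q,n}$. The only point you should make explicit is the identity justifying that substitution: if $h=\chi_{q,n}\cdot e^{2\pi i\psi}$ with $\psi:\Susp\To\R$ continuous and flow-smooth, then $C_h(t,x)=C_{\chi_{q,n}}(t,x)+\psi(\phi_t(x))-\psi(x)$ and $\omega_h=\omega_{\chi_{q,n}}+d\psi$, so the cocycle side is unchanged by $\phi_1$-invariance of $\nu$ and the form side is unchanged because $\int_{\Susp}d\psi(X)\,d\nu=0$ by $\phi_t$-invariance --- exactly the class-invariance the paper itself asserts when defining $H_{f,\nu}$ \`a la de Rham. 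With that line written out, your proof is complete.
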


\begin{example} 
Let $\alpha$ be any element in $\Ss$ and consider the rotation $R_\alpha:\Ss\To \Ss$ given by 
$z\mTo z+\alpha$. The suspension flow $\phi_t:\Ss\times \UC\To \Ss\times \UC$ is given by
\[ \phi_t(z,s) = (z+m\alpha,t+s-m)\quad (m\leq t+s < m+1). \]

Given any character $\chi_{q,n}\in \Char(\Susp)$,
\[ \omega_{\chi_{q,n}} = qd\theta + nd\lambda \]
and the vector field $X$ associated to $\phi_t$ is constant. In this case, 
$H_{R_\alpha,\mu}(\chi_{q,n}) = \alpha q + n$ and therefore
\[ \rho(R_\alpha)(\chi_{q,n}) = \exp(2\pi iq\alpha). \] 
That is, $\rho(R_\alpha)=\alpha$ which clearly coincides with the calculation made before.
\end{example}

\section[Poincar\'e theory for compact abelian one dimensional solenoidal groups]{Poincar\'e theory for compact abelian one dimensional solenoidal groups}
\label{Poincare_theory}

This Section exhibit a demonstration of the semiconjugacy theorem for homeomorphisms of 
$\Ss$ isotopic to the identity which are pseudoirrational rotations. 

\subsection[The rotation interval]{The rotation interval}
\label{rotation_interval}

Recall that since $\Pp$ is compact and convex, and $f$ is isotopic to the identity, the image 
$\mathfrak R(\Pp)$ is a compact interval $I_f$, which, up to a translation, is contained in the one parameter subgroup $\Ll_0$. This interval is called the \textsf{rotation interval} of $f$. Since $\Ll_0$ is canonically isomorphic to $\R$, it is possible to identify $I_f$ with an interval in the real line. 

\begin{remark}
If $f$ is isotopic to an irrational rotation $R_\alpha$ with $\alpha\notin \Ll_0$ (see Section below), the rotation interval $I_f$ of $f$ can be identified with $I_f\subset \Ll_0 + \alpha$.
\end{remark}

\begin{definition} 
\label{pseudoirrational_rotation}
The homeomorphisms $f$ is a \emph{\textsf{pseudoirrational rotation}} if $I_f$ consists of a single point $I_f=\{\alpha\}$. In this case, and only in this case, we call $\alpha$ the \emph{rotation element} of $f$.
\end{definition}

\begin{remark} 
For homeomorphisms of tori $\T^n$ with $n\geq2$ the rotation set, in general, is not an interval \cite{MZ}. In our case, since the solenoids are one dimensional, the rotation set is a closed interval, which can be reduced to one point.
\end{remark}

\begin{remark} 
There are examples of \emph{diffeomorphisms} $h:\Ss\to\Ss$ (i.e. homeomorphisms which are differentiable along the leaves) such that the rotation interval is nontrivial (consists of more than one point). This is obtained by adapting the quintessential example by Katok of the torus to our case: let $X$ be the canonical unit vector field tangent to the leaves and 
$g:\Ss\To \R$ a differentiable function that vanishes at a single point $p\in \Ss$, let 
$\{h_t\}_{t\in\R}$ be the flow of the vector field $Y=gX$, then we can take $h=h_1$.
\end{remark}

\subsection[Irrational rotations]{Irrational rotations}
\label{irrational_rotations}

Since $\Ss$ is torsion free, it follows that a nontrivial rotation has no periodic points. This means the dichotomy rational / irrational does not appear in this context and we only have to define what ``irrational'' means. The following seems to be an appropriate definition: 

\begin{definition}
\label{irrational_element}
The $\alpha\in \Ss$ is \emph{\textsf{irrational}} if $\{n\alpha:n\in \Z\}$ is dense in $\Ss$. In classical terminology, $\Ss$ is said to be \emph{\textsf{monothetic}} with generator 
$\alpha$.
\end{definition}

Since $\Ss$ is a compact abelian topological group, the next theorem is classical (see e.g. \cite{Gra})

\begin{theorem}
\label{equidistribution} 
If $\alpha\in \Ss$, the following propositions are equivalent:
\begin{enumerate}
\item The rotation $R_\alpha:\Ss\To \Ss$ given by $z\mTo z+\alpha$ is ergodic with respect to the Haar measure on $\Ss$.

\item $\chi(\alpha)\neq 1$, for every nontrivial character $\chi\in \Char(\Ss)$.

\item $\Ss$ is a monothetic group with generator $\alpha$.
\end{enumerate}
\end{theorem}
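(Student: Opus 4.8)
The plan is to establish the two equivalences (b) $\Leftrightarrow$ (c) and (a) $\Leftrightarrow$ (b) separately, the first by Pontryagin duality and the second by harmonic analysis on $\Ss$, exploiting that the discrete group $\Char(\Ss)\cong\Q$ furnishes a complete orthonormal system for $L^2(\Ss)$ with respect to the normalized Haar measure. Throughout, the single elementary identity $\chi\circ R_\alpha=\chi(\alpha)\,\chi$, valid for every $\chi\in\Char(\Ss)$ because $\chi(z+\alpha)=\chi(z)\chi(\alpha)$, does most of the work.

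For (b) $\Leftrightarrow$ (c), I would let $H:=\overline{\{n\alpha:n\in\Z\}}$ be the closed subgroup generated by $\alpha$, so that (c) is exactly the assertion $H=\Ss$. By the annihilator form of Pontryagin duality, a closed subgroup $H\subseteq\Ss$ equals $\Ss$ if and only if its annihilator $H^\perp:=\{\chi\in\Char(\Ss):\chi|_H\equiv 1\}$ is trivial. Since each character is continuous and multiplicative, $\chi(n\alpha)=\chi(\alpha)^n$, so $\chi$ annihilates the dense subgroup $\{n\alpha\}$, hence all of $H$, precisely when $\chi(\alpha)=1$. Thus $H^\perp$ is trivial exactly when the only character with $\chi(\alpha)=1$ is the trivial one, which is condition (b), giving (b) $\Leftrightarrow$ (c).

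For (a) $\Leftrightarrow$ (b), I would use that the characters form a complete orthonormal system in $L^2(\Ss)$: orthonormality follows from translation invariance of the Haar measure, and completeness from the Peter--Weyl theorem (equivalently, the characters separate points and form a conjugation--closed subalgebra of $\Cont(\Ss,\C)$, so Stone--Weierstrass makes them dense, hence dense in $L^2$). Expanding $g\in L^2(\Ss)$ as $g=\sum_\chi c_\chi\chi$ and applying $\chi\circ R_\alpha=\chi(\alpha)\,\chi$, the invariance $g\circ R_\alpha=g$ becomes $c_\chi(\chi(\alpha)-1)=0$ for every $\chi$. If (b) holds, then $\chi(\alpha)=1$ forces $\chi$ trivial, so every invariant $g$ is a multiple of the constant character and $R_\alpha$ is ergodic; conversely, if (b) fails, a non--trivial $\chi$ with $\chi(\alpha)=1$ is itself a non--constant $R_\alpha$--invariant function, so $R_\alpha$ is not ergodic.

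The only delicate point is the completeness of the character system in $L^2(\Ss)$, that is, the absence of invariant functions outside the closed span of the characters; this is precisely where Peter--Weyl (or Stone--Weierstrass together with the density of $\Cont(\Ss,\C)$ in $L^2(\Ss)$) must be invoked, and it is the step I would state most carefully. Everything else reduces to the multiplicative identity above and the orthogonality relations for characters, both routine.
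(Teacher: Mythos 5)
Your proof is correct. The paper itself does not prove Theorem \ref{equidistribution} --- it only remarks that the statement is classical for compact abelian groups and points to Grandet--Hugot \cite{Gra} --- so the comparison here is with the standard literature rather than with an argument in the text. What you supply is exactly that standard argument, cleanly split into its two halves: the equivalence (b) $\Leftrightarrow$ (c) via the annihilator formulation of Pontryagin duality (a closed subgroup $H\subseteq\Ss$ is all of $\Ss$ if and only if $H^{\perp}$ is trivial, which in turn rests on the fact that characters of the compact quotient $\Ss/H$ separate points), and the equivalence (a) $\Leftrightarrow$ (b) via the $L^2$ expansion in the orthonormal system $\Char(\Ss)\cong\Q$, using $\chi\circ R_\alpha=\chi(\alpha)\,\chi$ and comparison of Fourier coefficients. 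Both halves are sound: the invariance relation $c_\chi(\chi(\alpha)-1)=0$ is right, the converse direction correctly exhibits a non-trivial character fixed by $R_\alpha$ as a non-constant invariant function, and you correctly isolate Peter--Weyl (completeness of the characters in $L^2$) as the one non-routine input --- note that it is really used twice, once for $L^2$-completeness and once, implicitly, in the annihilator criterion, since the existence of a non-trivial character of $\Ss/H$ for a proper closed $H$ is itself a Peter--Weyl statement. What your write-up buys over the paper's citation is a self-contained proof whose only external ingredient is Peter--Weyl; it also makes transparent that nothing is special to $\Ss$, as the argument works verbatim for any compact abelian metrizable group, which is precisely why the paper is content to call the theorem classical.
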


\begin{remark}
\begin{enumerate}
\item Any nontrivial character $\chi\in \Char(\Ss)$ describes the solenoid $\Ss$ as a locally trivial fiber bundle over the circle $\UC$ with typical fiber a Cantor group. In fact, there is such a fibration for each $q\in \Q\setminus \{1\}$.

\item For every $\alpha\in \Ss$ and every nontrivial character, 
$\chi\circ R_\alpha = R_{\chi(\alpha)}\circ \chi$.

\item If $\alpha\in \Ss$ is irrational then $\chi(\alpha)\in \UC$ is irrational, for every nontrivial character $\chi\in \Char(\Ss)$.
\end{enumerate}
\end{remark}

\subsection[Generalized Poincar\'e theorem]{Generalized Poincar\'e theorem}

From now on we suppose that $f$ is a pseudoirrational rotation with unique rotation element$\rho(f)$.

Recall $\Ss$ is the orbit space of $\R\times \Zz$ under the $\Z$ -- action
\[ \gamma\cdot (t,k) = (t+\gamma,k-\gamma)\qquad (\gamma\in \Z). \]

Denote by $p:\R\times \Zz\To \Ss$ the canonical projection. It is clear that $p$ is an infinite cyclic covering.

If $F:\R\times \Zz\To \R\times \Zz$ is a lifting of $f$ to $\R\times \Zz$, $F$ has the form
\[ F(t,k) = (F_k(t),R_\alpha(k)), \]
where $\Zz\To \Homeo(\R)$ is a continuous function given by $k\mTo F_k$, 
$F_k:\R\To \R$ is a homeomorphism with limit periodic displacement $\Phi_t(x)$ (i.e., 
$\Phi$ is a uniform limit of periodic functions) and $\alpha\in \Zz$ is a monothetic generator. 

The condition of $F$ being equivariant with respect to the $\Z$ -- action is:
\[ F_{k-\gamma}(t+\gamma) = F_k(t) + \gamma, \] 
for any $\gamma\in \Z$. That is, $F$ commutes with the integral translation 
$T_\gamma:\R\times \Zz\To \R\times \Zz$ given by $(t,k)\mTo (t+\gamma,k)$ and also must be invariant under the $\Z$ -- action in $\Cont(\Zz,\Homeo(\R))$. 

\begin{remark}
It is very important to emphasize at this point that a lifting $F$ of $f$ exists and it is a homeomorphism of $\R\times \Zz$ due to the fact that $f$ is isotopic to the identity, which implies that $f$ keeps invariant the one dimensional leaves of the solenoid. As a consequence of this fact, $F$ leaves invariant the one dimensional leaves of $\R\times \Zz$. Since each leaf is canonically identified with $\R$, the \emph{displacement function} along the leaves can be defined in an obvious way.
\end{remark}

By the comments above it is adequate to consider $F$ as follows:
\[ F(t,k)=\left( F_k(t),k \right)\qquad (t,k)\in \R\times\Zz, \]
where $F_k:\R\To \R$ is a homeomorphism which depends continuously on $k\in \Zz$.

\begin{definition}
\label{bounded_meanvariation} 
The homeomorphism $f$ has \emph{\textsf{bounded mean variation}} if there exists $C>0$ such that the sequence $\{F_k^n(t) - t - n\tau(F)\}_{n\geq 1}$ is uniformly bounded by $C$, i.e. $C$ is independent of $(t,k)$. Here, $F_k^n$ is any lift of $f^n$ and $\tau(F)$ is a lifting of $\rho(f)$ to $\R\times \Zz$ contained in the same leaf as $(t,k)$. 
\end{definition}

\begin{remark}
Observe that if the sequence $\{F_k^n(t) - t - n\tau(F)\}_{n\geq 1}$ is uniformly bounded,  the sequence $\{F_k^n(t) - t - n\tau'\}_{n\geq 1}$ is also uniformly bounded if and only if 
$\tau'=\tau(F)$. Therefore, if a homeomorphism has bounded mean variation it is a pseudoirrational rotation.
\end{remark}

We can now state and prove the generalized version of Poincar\'e's theorem: the proof follows closely the classical proof (see \cite{Ghys}, \cite{Nav}).

\begin{theorem}
\label{Poincare_theorem} 
If $f\in \Homeo_+(\Ss)$ is a pseudoirrational rotation with irrational rotation element 
$\rho(f)$, then $f$ is semiconjugated to the irrational rotation $R_{\rho(f)}$ if and only if $f$ has bounded mean variation. 
\end{theorem}

\begin{proof}
The function $H:\R\times \Zz\To \R\times \Zz$ given by
\[ (t,k)\mTo (\sup_n \, \{F_k^n(t) - n\tau(F)\},k) \]
satisfies the following properties:
\begin{enumerate}
\item $H$ is nondecreasing, surjective and continuous on the left.

\item $H\circ T_1 = T_1\circ H$

\item $H\circ F = T_{\tau(F)}\circ H$.
\end{enumerate}

The proof that $H$ is nondecreasing follows from the fact that the lifting $F^n$ of $f^n$ is nondecreasing. The other properties on Condition (1) and Condition (2) are direct consequences of the definition of $H$ as a supremum. Condition (2) implies that $H$ descends to a map $h:\Ss\To\Ss$. Condition (3) implies that $h\circ{f}=R_{\rho(f)}\circ{h}$. Following almost \emph{verbatim} the arguments in (\cite{Ghys}, and \cite{Nav}, Theorem 2.2.6), it follows that $h$ is continuous and semiconjugates $f$ to $R_{\rho(f)}$. This follows from the fact that $F$ preserves each leaf of the form $\R\times \{k\}$, with 
$k\in \Zz$ and the map $g:\R\To \R$ given by $t\mTo \mathfrak{p}(F(t,k))-t$ is a quasihomomorphism.
\end{proof}

The immediate question arises:

\begin{question}
\label{question_conjugacy}
Under the same hypothesis of this theorem, is $f$ conjugated to the rotation $R_{\rho(f)}$ when $f$ is minimal?
\end{question}

This question and a complete development of the topological dynamics is the subject of next research (see \cite{CV}).

\section[The rotation element of a homeomorphism isotopic to a translation]{The rotation element of a homeomorphism isotopic to a translation}
\label{rotation_translation}

This Section introduces an appropriate definition of a rotation element for a homeomorphism of the one dimensional universal solenoid which is isotopic to a minimal translation by an element which is not in the base leaf. First, it is done the description of the suspension of a minimal translation in a general compact abelian group $G$, which happens to be also a compact abelian group. It follows, as a corollary, that the suspension of any homeomorphism of $G$ which is isotopic to a minimal translation is also a compact abelian group. This result is used to define the rotation element, generalizing the notion of Poincar\'e by using the theory of asymptotic cycles.
 
\subsection[The suspension of a homeomorphism isotopic to a translation]{The suspension of a homeomorphism isotopic to a translation}\label{suspension_translation}

Let $G$ be a metrizable compact abelian group and let $g\in G$. Consider the subgroup 
$\Gamma_g$ of the product group $G\times \R$ defined as follows: 
\[ \Gamma_g := \left\{ (g^n,n)\in G\times \R : n\in\Z \right\}. \]
$\Gamma_g$ is isomorphic to $\Z$, and there is a monomorphism $n\mTo (g^n,n)$. The group $\Gamma_g$ is a discrete, closed and normal subgroup of $G\times \R$. Also, 
$\Gamma_g$ is cocompact since $(G\times\R) / \Gamma_g$ is the image $p(G\times [0,1])$ of the canonical epimorphism $p : G\times\R \To (G\times \R)/\Gamma_g$.
 
\begin{remark}
Let $G$ be a metrizable compact abelian group and consider a translation 
$T:z\mTo \alpha z$. The \emph{\textsf{suspension}} of $T$ is the space 
\[ \Sigma_T(G) : = G\times \R / (x,1)\sim (T(x),0). \] 
In fact, the group $(G\times\R)/\Gamma_g$, as a topological space, is homeomorphic to the suspension of the translation $T_g:G\To G$, so we denote alternatively the group 
$(G\times\R)/\Gamma_g$ as $\Sigma_{T_g}(G)$.
\end{remark}

The theorem follows:

\begin{theorem} 
$\Sigma_T(G)$ is a compact abelian group which contains $G$ as a closed subgroup and 
\[ \Sigma_T(G)/G \cong \UC. \]
\end{theorem}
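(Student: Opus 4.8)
The plan is to realize $\Sigma_T(G)$ explicitly as a quotient of $G \times \R$ by a cyclic group action and then identify the resulting group structure. Recall the suspension is $\Sigma_T(G) = G \times [0,1]/(z,1)\sim(\alpha z, 0)$. I would first replace this with the equivalent description as the orbit space of $G \times \R$ under the $\Z$-action $n\cdot(z,s) := (\alpha^{-n}z, s+n)$, so that $\Sigma_T(G) \cong (G\times\R)/\Z$. The point of this reformulation is that $G\times\R$ is already a compact-by-Euclidean abelian group under coordinatewise operations $(z,s)\cdot(z',s') = (zz', s+s')$, and the subgroup being quotiented is the image of $\Z$ under the injective homomorphism $n \mapsto (\alpha^{-n}, n)$. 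The crucial observation is that this map \emph{is} a group homomorphism from $\Z$ into $G\times\R$ (since $\alpha^{-(n+m)} = \alpha^{-n}\alpha^{-m}$), so its image $\Gamma := \{(\alpha^{-n}, n) : n\in\Z\}$ is a subgroup. Hence the $\Z$-action is translation by a subgroup, and the quotient $(G\times\R)/\Gamma$ inherits a well-defined abelian group structure.

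Next I would check that $\Gamma$ is a \emph{closed} discrete subgroup of $G\times\R$, which guarantees the quotient is a Hausdorff topological group (and compact, being a continuous image of the compact set $G\times[0,1]$). Discreteness follows because the second coordinate of any nonidentity element of $\Gamma$ is a nonzero integer, so $\Gamma$ meets the neighborhood $G\times(-1,1)$ only in the identity; closedness then follows from discreteness together with the fact that $\R$ is not compact, by a standard argument that a discrete subgroup whose projection to the noncompact factor is injective onto $\Z$ is closed. With $\Gamma$ closed, $\Sigma_T(G) = (G\times\R)/\Gamma$ is a compact metrizable abelian topological group, and its multiplication is exactly the one descending from $G\times\R$.

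For the structural claims I would proceed as follows. The inclusion $G \hookrightarrow G\times\R$, $z\mapsto (z,0)$, composed with the quotient map gives a homomorphism $G \to \Sigma_T(G)$; its image is the subgroup $\pi(G\times\{0\})$, which is precisely $\pi(G\times\{0\}) \cong G$ since $\Gamma \cap (G\times\{0\}) = \{(\mathbf{1},0)\}$ (the only element of $\Gamma$ with integer second coordinate equal to $0$ is the identity). This realizes $G$ as a closed subgroup of $\Sigma_T(G)$. For the quotient, I would use the second-coordinate projection $G\times\R \to \R$, $(z,s)\mapsto s$, which is a surjective homomorphism carrying $\Gamma$ onto $\Z \subset \R$. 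It therefore descends to a surjective homomorphism $\Sigma_T(G) \to \R/\Z \cong \UC$ whose kernel is exactly the image of $G\times\{0\}$, i.e.\ the copy of $G$. By the first isomorphism theorem for topological groups, $\Sigma_T(G)/G \cong \R/\Z \cong \UC$, as claimed.

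I expect the main obstacle to be the verification that $\Gamma$ is closed and that the quotient topology on $(G\times\R)/\Gamma$ agrees with the suspension topology on $\Sigma_T(G)$ coming from $G\times[0,1]$ — in other words, checking that the natural continuous bijection $G\times[0,1]/\!\sim \; \to (G\times\R)/\Gamma$ is a homeomorphism. This is a compactness argument (a continuous bijection from a compact space to a Hausdorff space is a homeomorphism), but one must first know the target is Hausdorff, which is exactly where the closedness of $\Gamma$ is needed; so the logical order matters. I would also note that minimality of $T$ is not actually required for this particular statement — it only ensures $\Sigma_T(G)$ is connected and that the dynamics are interesting — so I would remark that the group-theoretic conclusion holds for any translation, with minimality reserved for later use.
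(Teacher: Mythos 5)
Your proof is correct, but it takes a genuinely different route from the paper's. The paper proceeds metrically: it fixes an invariant distance on $G$, observes that the product distance on $G\times\R$ is invariant under the gluing map $(g,t)\mapsto(\alpha g,t+1)$ and hence descends to $\Sigma_T(G)$, so that the suspension flow $\{F_s\}$ acts by isometries; it then takes the closure $\bar\Gamma$ of $\{F_s\}$ inside the compact group $\Isom(\Sigma_T(G))$, uses \emph{minimality} to get a dense orbit, and extends the orbit map $F_s(x)\mapsto F_s$ to a homeomorphism $\Sigma_T(G)\To\bar\Gamma$, transporting the group structure back. You instead exploit the algebraic nature of the gluing: since $T$ is translation by $\alpha$, the deck transformation of $G\times\R$ is translation by the element $(\alpha^{-1},1)$, so $\Sigma_T(G)=(G\times\R)/\Gamma$ with $\Gamma=\{(\alpha^{-n},n):n\in\Z\}$ a discrete closed subgroup, and the group structure, the closed embedding of $G$, and the identification $\Sigma_T(G)/G\cong\UC$ all drop out of standard facts about quotients of topological groups, with the first isomorphism theorem making the exact sequence $0\to G\to\Sigma_T(G)\to\UC\to 0$ completely explicit. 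Each approach buys something: yours is more elementary, pins down the group law concretely, and — as you correctly note — shows minimality is irrelevant to this particular statement (it only enters later, e.g.\ for connectedness and the dynamics); the paper's argument uses only that the flow is isometric with a dense orbit, so it is really the classical template ``minimal equicontinuous flow $=$ translation flow on a compact abelian group,'' which applies in settings where no explicit algebraic model of the suspension is available. One small polish: your closedness argument for $\Gamma$ can be stated more cleanly — either invoke the standard fact that a discrete subgroup of a Hausdorff topological group is automatically closed, or note directly that if $(\alpha^{-n_k},n_k)$ converges then the integers $n_k$ converge in $\R$ and are therefore eventually constant, so the limit lies in $\Gamma$. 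Your attention to the logical order (Hausdorffness of the target before applying the compact-to-Hausdorff bijection argument) is exactly right.
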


\begin{corollary} 
Since the suspension only depends on the isotopy class of the homeomorphism, the suspension of any homeomorphism $f:G\To G$ isotopic to a translation, is a compact abelian group. If the translation is a minimal translation then the suspension flow is a minimal flow (in fact the orbit through the identity is a dense one parameter subgroup).
\end{corollary}

\begin{example}
\begin{enumerate} 
\item The $2$ -- torus $\T^2$ is the suspension of an irrational rotation on the circle. 
\item The universal solenoid $\Ss$ is a suspension of a minimal translation of $\Zz$.
\end{enumerate}
\end{example}

For the particular case of the universal solenoid $\Ss$, it is not true that any homeomorphism is isotopic to a translation. In fact, in \cite{Odd} it is proved the following result:

\begin{theorem} 
If $\Homeo_\Ll(\Ss)$ is the subgroup of $\Homeo(\Ss)$ consisting of homeomorphisms of $\Ss$ that preserves the base leaf, then
\[ \Homeo(\Ss) \cong \Homeo_\Ll(\Ss) \times_\Z \Zz. \]
\end{theorem}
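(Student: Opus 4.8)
The plan is to show that every self--homeomorphism of $\Ss$ factors, essentially uniquely, as a translation by an element of $\Zz$ followed by a base--leaf--preserving homeomorphism, and to read off the balanced product from the indeterminacy in this factorization.

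First I would describe the leaf space and the translation subgroup. Because the leaves of $\Ss$ are exactly its path--components, every $f\in\Homeo(\Ss)$ permutes them; in the model $\Ss=\R\times_\Z\Zz$ with projection $p\colon\R\times\Zz\To\Ss$, the leaves are the sets $p(\R\times\{s\})$, indexed by $s\in\Zz/\Z$, and the base leaf is $\Ll_0=p(\R\times\{0\})$. The translations $R_t\colon z\mapsto z+t$ by elements $t\in\Zz\subset\Ss$ form a subgroup $\mathcal T\cong\Zz$ of $\Homeo(\Ss)$ acting on leaves by $p(\R\times\{s\})\mapsto p(\R\times\{s+t\})$. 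This action is transitive, and the stabilizer of $\Ll_0$ is exactly $\mathcal T\cap\Homeo_\Ll(\Ss)=\{R_\gamma:\gamma\in\Z\}\cong\Z$; hence the leaf space is $\Zz/\Z$.

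Next I would build and analyze the factorization. Given $f\in\Homeo(\Ss)$, the image $f(\Ll_0)$ is the leaf through some $s\in\Zz/\Z$; choosing any lift $t\in\Zz$ of $s$, the homeomorphism $g:=R_t^{-1}\circ f$ fixes $\Ll_0$ setwise, so $g\in\Homeo_\Ll(\Ss)$ and $f=R_t\circ g$. Thus the map
\[ \Psi\colon \Homeo_\Ll(\Ss)\times\Zz\To\Homeo(\Ss),\qquad (g,t)\longmapsto R_t\circ g, \]
is surjective. If $R_t\circ g=R_{t'}\circ g'$, then $R_{t-t'}=g'\circ g^{-1}\in\mathcal T\cap\Homeo_\Ll(\Ss)$, forcing $t'-t=:\gamma\in\Z$ and $g'=R_{-\gamma}\circ g$. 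Therefore $\Psi$ identifies $(g,t)$ with $(R_{-\gamma}\circ g,\,t+\gamma)$ for $\gamma\in\Z$, which is precisely the free $\Z$--action defining the balanced product (with $\Z$ acting on $\Zz$ by translation and on $\Homeo_\Ll(\Ss)$ through the subgroup $\{R_\gamma\}$). Transporting the composition law of $\Homeo(\Ss)$ across the resulting bijection yields the asserted isomorphism
\[ \Homeo_\Ll(\Ss)\times_\Z\Zz\;\cong\;\Homeo(\Ss). \]

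The main obstacle is topological rather than algebraic. Since $\Z$ is dense in $\Zz$, the leaf space $\Zz/\Z$ is non--Hausdorff, the subgroup $\mathcal T\cap\Homeo_\Ll(\Ss)\cong\Z$ is not discrete (indeed $R_{n!}\to\id$ as $n\to\infty$), and consequently $\Homeo_\Ll(\Ss)$ is not closed in $\Homeo(\Ss)$. Thus one cannot split off a continuous ``leaf--index'' map $\Homeo(\Ss)\To\Zz/\Z$ directly, and the role of the balanced product is exactly to postpone the non--Hausdorff quotient by working on the Hausdorff cover $\Zz$. The delicate part of the argument is therefore to equip $\Homeo_\Ll(\Ss)\times_\Z\Zz$ with the correct topology and to produce local continuous lifts $t\in\Zz$ of the leaf index, so that $\Psi$ and the factorization $f\mapsto(g,t)$ are mutually continuous; this is what promotes the set--theoretic bijection above to an isomorphism of topological groups.
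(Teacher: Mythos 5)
A preliminary point: the paper contains no proof of this theorem at all --- it is quoted from Odden \cite{Odd} --- so there is no in-paper argument to compare yours against, and I judge the proposal on its own merits. On those merits it is correct and is the natural argument: leaves are the path components of $\Ss$ (any path lifted through the covering $p:\R\times\Zz\To\Ss$ has constant $\Zz$-coordinate, since $\Zz$ is totally disconnected), so homeomorphisms permute them; the translations $\{R_t\}_{t\in\Zz}$ act transitively on the leaf space with stabilizer of $\Ll_0$ equal to $\{R_\gamma\}_{\gamma\in\Z}$; hence $f=R_t\circ g$ with $g\in\Homeo_\Ll(\Ss)$, and the indeterminacy is exactly your $\Z$-action. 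Your topological observations ($R_{n!}\to\id$, non-closedness of $\Homeo_\Ll(\Ss)$) are correct, and the local continuous lifts you leave as the ``delicate part'' do exist by a short argument worth writing down: $f\mapsto f(0)$ is continuous, $p$ restricted to $I\times\Zz$ with $I$ an open interval of length less than $1$ is a homeomorphism onto an open set, so near any fixed $f_0$ the $\Zz$-coordinate $t(f)$ of $f(0)$ in such a chart depends continuously on $f$, and then so does $g(f)=R_{t(f)}^{-1}\circ f$; this makes $\Psi$ a homeomorphism for the quotient topology on the balanced product.

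The one place you should be more explicit is the phrase ``transporting the composition law''. Transport of structure is in fact the only correct reading of the symbol $\times_\Z$ here, and it deserves a sentence of justification: the balanced product has no intrinsic quotient group structure, because the anti-diagonal subgroup $\{(R_{-\gamma},\gamma):\gamma\in\Z\}$ is not normal in the direct product group $\Homeo_\Ll(\Ss)\times\Zz$. Indeed, $\{R_\gamma\}_{\gamma\in\Z}$ fails to be central in $\Homeo_\Ll(\Ss)$: the automorphism $z\mapsto 2z$ preserves the base leaf and conjugates $R_1$ to $R_2$. So $\Homeo_\Ll(\Ss)\times_\Z\Zz$ is \emph{a priori} only a topological space, and the actual content of the theorem is the internal factorization you establish --- $\Homeo(\Ss)$ is the product of its subgroups $\Homeo_\Ll(\Ss)$ and $\{R_t\}_{t\in\Zz}\cong\Zz$ amalgamated over their intersection $\Z$, with multiplication twisted by the action of $\Homeo_\Ll(\Ss)$ on the leaf space --- together with the homeomorphism statement above. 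With that clarification your proof is complete; without it, a reader could mistake $\times_\Z$ for a central product, for which the statement would not even parse, precisely because $\Z$ is not central in $\Homeo_\Ll(\Ss)$.
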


For instance, by Pontryagin duality, the group of automorphisms of $\Ss$ is isomorphic to the group of automorphisms of $\Q$, which is $\Q^*$, since any automorphism is determined by its value at $1$. Hence, any automorphism of $\Ss$ is never isotopic to a translation.

\begin{remark} 
The elements in the same one dimensional leaf $\Ll$ of $\Ss$ determines isotopic translations. If an element $f\in \Homeo(\Ss)$ is isotopic to a translation then $f$ is isotopic to a translation of the form $\mathfrak{t} +\gamma$, where $\gamma\in \Ll\cap \Zz$.
\end{remark} 

\subsection[The rotation element of a homeomorphism isotopic to a translation]{The rotation element with respect to an invariant measure of a homeomorphism isotopic to a translation}
\label{rotation_isotopic-translations}

Suppose that $f:\Ss\To \Ss$ is a homeomorphism which is isotopic to a minimal translation by an element not in the base leaf. According to the last section, the suspension $\Susp$ is a compact abelian group and there is a natural continuous group epimorphism 
$\Susp \To \UC$ whose kernel is a closed subgroup of $\Susp$ isomorphic to $\Ss$. Hence, there is an exact sequence of compact abelian groups 
\[ 0\To \Ss\To \Susp\To \UC \To 0. \]

By duality, there is an exact sequence of discrete groups
\[ 0\To \Z\To \Char(\Susp)\To \Q \To 0. \]

In this situation, we do not know an explicit description for $\Char(\Susp)$ and its elements. Hence, the calculation of the $1$--cocyle to describe the homomorphism 
$H_{f,\mu}\in \Hom(\Char(\Susp),\R)$ is not quite clear as we had in the isotopic to the identity case. However, knowing the fact that $\Susp$ is a compact abelian group, it is possible to calculate the values of $H_{f,\mu}$ by restricting the elements in 
$\Char(\Susp)$ to elements in $\Char(\Ss)$. Proceeding as in Section \ref{rotation_set}, this can be done in the following way (Compare \cite{Ath}).

Denote by $[z,t]$ the elements in the suspension $\Susp$ which are now equivalence classes of pairs $(z,t)$ under the suspension relation. The suspension flow is given by
\[  \phi(t,[z,s]) = [f^m(z),t+s-m], \] 
where $m\leq t+s < m+1$. As before, the canonical projection 
$\pi:\Ss\times [0,1]\To \Susp$ sends $\Ss\times \{0\}$ homeomorphically onto its image 
$\pi(\Ss\times \{0\})\equiv \Ss$ and every orbit of the suspension flow intersects $\Ss$. If $\nu$ is any $\phi_t$--invariant Borel probability measure on $\Susp$, then there is a well defined homomorphism $H_{f,\nu}:\Char(\Susp)\To \R$ given by
\[ H_{f,\nu}(\chi) = \int_{\Susp} C_{\chi}(1,[z,s]) d\nu, \]
where $C_{\chi}(1,[z,s])$ is the $1$--cocycle associated to any nontrivial character 
$\chi\in \Char(\Susp)$, at time $t=1$.

Now recall the $1$--cocycle associated to $\chi$ satisfies the relation 
(see Section \ref{rotation_set}) 
\[ C_{\chi}(t+u,[z,s]) = C_{\chi}(u,\phi_t([z,s]) + C_{\chi}(t,[z,s]), \] 
for every $t,u\in \R$ and $[z,s]\in \Susp$. Letting $s=0$, $u=t$ and $t=1$ in this relation it is obtained
\[ C_{\chi}(1+t,[z,0]) = C_{\chi}(t,[f(z),0]) + C_{\chi}(1,[z,0]). \] 

Now setting $u=1$ and $s=0$ and applying the cocycle condition on the left hand of this expression we obtain:
\[ C_{\chi}(1+t,[z,0]) = C_{\chi}(1,[z,t]) + C_{\chi}(t,[z,0]). \] 

Replacing this last equality in the first relation, rearranging the terms, and setting $t=s$, it follows that for any 
$s\in [0,1)$ and $z\in \Ss$ it holds
\[ C_\chi(1,[z,s]) = C_\chi(s,[f(z),0]) + C_\chi(1,[z,0]) - C_\chi(s,[z,0]). \]

If $\mu$ is the $f$--invariant Borel probability measure on $\Ss$ obtained by disintegration of $\nu$ with respect to the fibers, by replacing the last expression in the definition of 
$H_{f,\nu}(\chi)$ and using Fubini's theorem, we get
\begin{align*}
H_{f,\nu}(\chi)	&= \int_{\Susp} C_{\chi}(1,[z,s]) d\nu \\
&= \int_0^1 \left( \int_{\Ss} C_{\chi}(1,[z,s]) d\mu \right) ds \\
&= \int_0^1 \left( \int_{\Ss} C_{\chi}(1,[z,0]) d\mu \right) ds + 
	 \int_0^1 \left( \int_{\Ss} \left[ C_\chi(s,[f(z),0]) - C_\chi(s,[z,0])\right] d\mu \right) ds \\
&= \int_{\Ss} C_{\chi}(1,[z,0]) d\mu. 	
\end{align*}

Hence
\[ H_{f,\nu}(\chi) = \int_{\Ss} C_{\chi}(1,[z,0]) d\mu. \]

Since $\chi$ is any nontrivial character in $\Char(\Susp)$, by restricting $\chi$ to $\Ss$, one obtains a nontrivial character $\chi_q\in \Char(\Ss)$. Applying Proposition \ref{associated_cocycle} to $\chi_q$, the following relation is obtained
\[ \chi_q(f(z)) = \exp(2\pi i C_{\chi_q}(1,[z,0]) \, \chi_q(z). \]

This implies that $q(f(z)-z) - C_{\chi_q}(1,[z,0]) \in \Z$, and, since 
$q(f-\id) - C_{\chi_q}(1,[\cdot,0]))$ is a continuous function on $\Ss$, we conclude that 
$C_{\chi_q}(1,[z,0]) = q(f(z) - z)$ for any $z\in \Ss$. Since $f(z) - z = \varphi(z)$ is the displacement function along the leaves, the value of the homomorphism $H_{f,\nu}$, which now depends on $\mu$, at any character $\chi\in \Char(\Susp)$ is given by
\[ H_{f,\mu}(\chi) = q\int_\Ss \varphi d\mu. \]

Hence, for each $\mu\in \Pp$, there exists a well defined continuous homomorphism 
\[ \rho_\mu:\Char(\Susp)\To \UC \] 
given by
\begin{align*}
\rho_\mu(\chi)	&:= \exp(2\pi iH_{f,\mu}(\chi))\\
							&= \exp \left(2\pi i q\int_\Ss \varphi d\mu\right).
\end{align*}

This allows to establish the more general definition:

\begin{definition}
If $f:\Ss\To \Ss$ is any homeomorphism which is isotopic to a rotation by an element not in the base leaf, the element $\rho_\mu(f) := \rho_\mu \in \Ss$ defined as above is the \emph{\textsf{rotation element}} associated to $f$ with respect to the measure $\mu$.
\end{definition}

\begin{remark}
If $f$ is isotopic to an irrational rotation $R_\alpha$ with $\alpha\notin \Ll_0$, then the rotation interval $I_f$ of $f$ can be identified with $I_f\subset \Ll_0 + \alpha$.
\end{remark}

As indicated in the Introduction (see Section \ref{introduction}), the theory developed in this paper can be rewritten verbatim for any compact abelian one dimensional solenoidal group, since, by Pontrjagin's duality theory, any such group is the Pontryagin dual of a nontrivial additive subgroup $G\subset \Q$, where $\Q$ has the discrete topology. Denote by $\Ss_G$ such a group. According with the theory developed here, the following result is plausible:

\begin{theorem}
\label{solenoidal_Poincare-theorem}
Suppose that $f:\Ss_G\To \Ss_G$ is any homeomorphism isotopic to the identity, or isotopic to a rotation by an element not in the base leaf, with irrational rotation element $\rho(f)$. The homeomorphism $f$ is semiconjugated to the irrational rotation $R_{\rho(f)}$ if and only if 
$f$ has bounded mean variation.
\end{theorem}

The question remains:

\begin{question}
Under the same hypothesis, is $f$ is conjugated to the rotation $R_{\rho(f)}$ when $f$ is minimal? 
\end{question}

\end{document}